\newtheorem{theorem}{Theorem}[section]
\newtheorem{proposition}[theorem]{Proposition}
\newtheorem{corollary}[theorem]{Corollary}
\theoremstyle{definition}
\theoremstyle{remark}
\numberwithin{equation}{section}
\begin{document}

\title{On Affine Interest Rate Models} 

\author{{\bf Paul Lescot\rm}}

\address{Laboratoire de Math\'ematiques Rapha\"el Salem \\
UMR 6085 CNRS \\ 
Universit\'e de Rouen \\
Technop\^ole du Madrillet \\
Avenue de l'Universit\'e, B.P. 12 \\
76801 Saint-Etienne-du-Rouvray (FRANCE) \\ 
T\'el. 00 33 (0)2 32 95 52 24 \\
Fax 00 33 (0)2 32 95 52 86 \\
Paul.Lescot@univ-rouen.fr \\
}

\date{November 17th, 2010}

\setcounter{section}{0}
\begin{abstract}

Bernstein processes are Brownian diffusions that appear in Euclidean Quantum Mechanics.
The consideration of the symmetries of the associated Hamilton-Jacobi-Bellman equation 
allows one to obtain various relations between stochastic processes (Lescot-Zambrini, \bf Progress in Probability\rm ,
vols 58 and 59). More recently it has appeared that each \it one--factor affine interest rate model \rm (in the sense of Leblanc-Scaillet) could be described using such a Bernstein process.

MSC 91G30, 60H10

Keywords : interest rate models, isovectors, Bessel processes.
\end{abstract}
\maketitle

\section{Introduction}
\label{sec:1}

The relationship between Bernstein processes and Mathematical Finance was first glimpsed in
\S 5 of \cite{9}, where the Alili--Patie family of transformations $S^{\alpha,\beta}$
(see \cite{1},\S 2, for the particular case $\alpha=1$, and \cite{9}, pp.60--61, for the general case) was reinterpreted in the context of isovectors for the (backward) heat equation. Furthermore, Patie's composition formula
(\cite{9}, p.60) for these transformations was derived from the commutation relations of the canonical basis
of the aforementioned isovector algebra.

As the motivation for Patie's work had been to compute certain option prices in the framework of an affine interest rate model
(see \cite{9}, pp. 101--104), it was natural to look for a \it direct parametrization \rm of such a model by a Bernstein process.
It turns out that, for any \it one--factor interest rate model \rm in the sense of Leblanc and Scaillet (\cite{4},p. 351), the associated \it square root process \rm coincides, up to some (possibly infinite) random time, with a Bernstein process
(\S 3, Theorem 3.4). The potential $\displaystyle\frac{1}{q^{2}}$
(that had been considered in \cite{8}, p. 220, on physical grounds)
appears here naturally (Theorem 3.4) : this is one more example of the 
deep links between Euclidean Quantum Mechanics and Mathematical Finance.

\section{An isovector calculation}
\label{sec:2}

We shall work within the context of \cite{8}, \S 3.
We consider the Hamilton--Jacobi--Bellman equation 
\begin{equation}
\displaystyle\frac{\partial S}{\partial t}+\displaystyle\frac{\theta^{2}}{2}\displaystyle\frac{\partial^{2} S}{\partial q^{2}}-\displaystyle\frac{1}{2}
(\displaystyle\frac{\partial S}{\partial q})^{2}+V=0\,\,\, \tag{$\mathcal H\mathcal J\mathcal B^{V}$}
\end{equation}
with potential
$$
V(t,q)=\frac{C}{q^{2}}+Dq^{2}\,\, ,
$$
where we have replaced (as explained in \cite{5}) $\sqrt{\hbar}$ by $\theta$.

In order to determine the Lie algebra $\mathcal H_{V}$ of \it pure isovectors \rm for $(\mathcal H\mathcal J\mathcal B^{V})$, \it i.e. \rm the
algebra $\aleph$ of \cite{8},  we have to solve the auxiliary equation (3.29) from \cite{8},p. 215, \it i.e. \rm :
$$
-\frac{1}{4}\overset{...}{T_{N}}q^{2}-\overset{..}{l}q+\overset{.}{\sigma}+\frac{1}{2}\overset{.}{T_{N}}q(-\frac{C}{q^{3}}+2D q)+l(-\frac{2C}{q^{3}}+2Dq)+\overset{.}{T_{N}}(\frac{C}{q^{2}}+Dq^{2})
-\frac{\theta^{2}}{4}\overset{..}{T_{N}}=0 \,\, ,
$$
that is :
$$
q^{2}(2D\overset{.}{T_{N}}-\frac{1}{4}\overset{...}{T_{N}})
+q(-\overset{..}{l}+2Dl)+\overset{.}{\sigma}-\frac{\theta^{2}}{4}\overset{...}{T_{N}}-\frac{2Cl}{q^{3}}=0\,\, .
$$

As $T_{N}$, $l$ and $\sigma$ depend only upon $t$, the system is equivalent to :
$$
\left\{
\aligned 2Cl &= 0 \\
\overset{..}{l} &= 2Dl \\
\overset{.}{\sigma} &=\frac{\theta^{2}}{4}\overset{..}{T_{N}} \\
\overset{...}{T_{N}} &=8D\overset{.}{T_{N}}\,\, .
\endaligned
\right.
$$

Two different cases now appear :

\bf{1)} $C\neq 0$\rm

Then one must have $l=0$, in which case the second condition
holds automatically, and the system reduces itself to :

$$
\left\{
\aligned
\overset{.}{\sigma} &=\frac{\theta^{2}}{4}\overset{..}{T_{N}} \\
\overset{...}{T_{N}} &=8D\overset{.}{T_{N}}
\endaligned
\right.
$$

\bf{1)a)} $D>0$\rm

Setting $\epsilon=\sqrt{8D}$, we find

$$
\overset{.}{T_{N}}=C_{1}e^{\epsilon t}+C_{2}e^{-\epsilon t}\,\, ,
$$

whence

$$
T_{N}=\frac{C_{1}}{\epsilon}e^{\epsilon t}-\frac{C_{2}}{\epsilon}e^{-\epsilon t}+C_{3}
$$

and

$$
\sigma=\frac{\theta^{2}}{4}\overset{.}{T_{N}}+C_{4}
$$

therefore :
$$
\sigma=\frac{\theta^{2}C_{1}}{4}e^{\epsilon t}+\frac{\theta^{2}C_{2}}{4}e^{-\epsilon t}+C_{4}
$$

where $(C_{j})_{1\leq j \leq 4}$ denote arbitrary (real) constants.
In particular
$$
dim(\mathcal H_{V})=4\,\, .
$$

\bf{1)b)}$D=0$\rm

Then from $\overset{...}{T_{N}}=0$ follows
$$
T_{N}=C_{1}t^{2}+C_{2}t+C_{3}
$$
for constants $C_{1},C_{2},C_{3}$.
Then 
$$
\overset{.}{\sigma}=\frac{\theta^{2}}{4}\overset{..}{T_{N}}=\frac{\theta^{2}C_{1}}{2}
$$
and
$$
\sigma=\frac{\theta^{2}C_{1}}{2}t+C_{4}\,\, .
$$

Therefore, here too, $dim(\mathcal H_{V})=4$; furthermore, 
we get an explicit expression for the isovectors :
$$
N^{t}=T_{N}=C_{1}t^{2}+C_{2}t+C_{3} \,\, ,
$$

\begin{eqnarray}
N^{q} \nonumber 
&=&\frac{1}{2}q\overset{.}{T_{N}}+l \nonumber \\
&=&\frac{1}{2}q(2C_{1}t+C_{2}) \nonumber \\
&=&C_{1}tq+\displaystyle\frac{C_{2}q}{2} \,\, ,\nonumber
\end{eqnarray}

and

\begin{eqnarray}
\aligned
N^{S} \nonumber
&=-\phi \nonumber \\
&=-\frac{1}{4}q^{2}\overset{..}{T_{N}}-q\overset{.}{l}+\sigma \nonumber \\
&=-\frac{1}{4}q^{2}.2C_{1}+\frac{\theta^{2}}{2}C_{1}t+C_{4} \nonumber \\
&=\frac{C_{1}}{2}(\theta^{2}t-q^{2})+C_{4} \,\, .\nonumber 
\endaligned
\end{eqnarray}

A canonical basis for $\mathcal H_{V}$
is thus given by $(M_{i})_{1\leq i\leq 4}$,
where $M_{i}$ is characterized by $C_{j}=\delta_{ij}$ (Kronecker's symbol).
Using the notation of \cite{7}, it appears that
$$
M_{1}=\displaystyle\frac{1}{2}N_{6} \,\, ,
$$

$$
M_{2}=\displaystyle\frac{1}{2}N_{4} \,\, ,
$$

$$
M_{3}=N_{1} \,\, ,
$$

and

$$
M_{4}=-\displaystyle\frac{1}{\theta^{2}}N_{3} \,\, ,
$$
therefore $\mathcal H_{V}$ is generated by $N_{1}$, $N_{3}$, $N_{4}$ and
$N_{6}$. We thereby recover the result of \cite{8}, p. 220, modulo the correction of a misprint.
This list ties in nicely with the symmetry properties of certain diffusions related to Bessel
processes (see \cite{6} for a detailed explanation).

\bf{1)c)}$D<0$\rm

Setting now $\epsilon=\sqrt{-8D}$, we find

$$
\overset{.}{T_{N}}=C_{1}\cos{(\epsilon t)}+C_{2}\sin{(\epsilon t)} \,\, ,
$$

whence

$$
T_{N}=\frac{C_{1}}{\epsilon}\sin{(\epsilon t)}-\frac{C_{2}}{\epsilon}\cos{(\epsilon t)}+C_{3} \,\, ,
$$

and, as above :

$$
\sigma=\frac{\theta^{2}}{4}\overset{.}{T_{N}}+C_{4}\,\, ,
$$

therefore :
$$
\sigma=\frac{\theta^{2}C_{1}}{4}\cos{(\epsilon t)}+\frac{\theta^{2}C_{2}}{4}\sin{(\epsilon t)}+C_{4}
$$

where $(C_{j})_{1\leq j \leq 4}$ denote arbitrary (real) constants.
In particular \,\, ,
$$
dim(\mathcal H_{V})=4 \,\, .
$$

\bf{2)}$C=0$\rm

Then the system becomes

\begin{eqnarray}
\left\{
\aligned
\overset{..}{l} &=2Dl \nonumber \\
\overset{.}{\sigma} &=\frac{\theta^{2}}{4}\overset{..}{T_{N}} \nonumber \\
\overset{...}{T_{N}}&=8D\overset{.}{T_{N}} \,\, .\\
\endaligned
\right.
\end{eqnarray}

The equation for $l$ on the one hand, and the system for $(\sigma,T_{N})$ on the other
hand, are independent, and, as above, the first one has a two--dimensional 
space of solutions and the second one a four--dimensional space of solutions, \it i.e. \rm

$$
dim(\mathcal H_{V})=6\,\, .
$$

Whence
\begin{theorem} The isovector algebra $\mathcal H_{V}$ associated with $V$
has dimension $6$ if and only if $C=0$ ; in the opposite case,
it has dimension $4$.
\end{theorem}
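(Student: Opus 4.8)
The plan is to read off $\dim(\mathcal H_V)$ directly as the dimension of the solution space of the linear system
$$
\left\{
\aligned
2Cl &= 0 \\
\overset{..}{l} &= 2Dl \\
\overset{.}{\sigma} &= \frac{\theta^{2}}{4}\overset{..}{T_{N}} \\
\overset{...}{T_{N}} &= 8D\overset{.}{T_{N}}
\endaligned
\right.
$$
in the unknown functions $T_{N}$, $l$, $\sigma$ of $t$. The structural observation that makes the count painless is that this system splits into two independent blocks: the last two equations constrain only the pair $(\sigma, T_{N})$, whereas the first two constrain only $l$. Thus $\dim(\mathcal H_V)$ is the sum of the dimensions of the two blocks, and the only coupling to the parameter $C$ sits in the purely algebraic relation $2Cl = 0$.

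First I would count the $(\sigma, T_{N})$ block, whose dimension is independent of $C$. Setting $u = \overset{.}{T_{N}}$, the equation $\overset{...}{T_{N}} = 8D\overset{.}{T_{N}}$ reads $\overset{..}{u} = 8Du$, a second--order linear ODE with a two--dimensional solution space in every sign regime of $D$ (exponentials for $D>0$, an affine function for $D=0$, trigonometric functions for $D<0$: precisely the three subcases 1a, 1b, 1c already treated). Integrating $u$ back to $T_{N}$ contributes a third constant, and the relation $\overset{.}{\sigma} = \frac{\theta^{2}}{4}\overset{..}{T_{N}}$ then pins down $\sigma$ up to one further additive constant. Hence this block always contributes exactly four dimensions, corresponding to the constants $C_{1}, C_{2}, C_{3}, C_{4}$.

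Next I would analyze the $l$ block under the constraint $2Cl = 0$. On its own $\overset{..}{l} = 2Dl$ has a two--dimensional solution space; the question is how many of its solutions survive the algebraic constraint. If $C \neq 0$, the constraint forces $l \equiv 0$, so the block contributes nothing and $\dim(\mathcal H_V) = 4$; if $C = 0$, the constraint is vacuous, $l$ ranges over the whole two--dimensional space, and the block adds two dimensions, giving $\dim(\mathcal H_V) = 6$. Combining the two blocks yields the asserted dichotomy.

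There is no deep obstacle here beyond careful bookkeeping; the single point deserving attention is confirming that the solution space of $\overset{..}{u} = 8Du$ is genuinely two--dimensional in all three sign regimes of $D$, with no degenerate collapse, which the explicit integrations carried out in subcases 1a--1c confirm.
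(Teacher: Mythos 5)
Your proof is correct and follows essentially the same route as the paper: both reduce the problem to the linear system in $(T_{N},l,\sigma)$, observe that the $(\sigma,T_{N})$ block always contributes four dimensions (checked in the three sign regimes of $D$) while the $l$-block contributes two dimensions exactly when the algebraic constraint $2Cl=0$ is vacuous, i.e.\ when $C=0$. The only cosmetic difference is that you make the block decomposition explicit up front, whereas the paper organizes the same count as a case split on $C\neq 0$ versus $C=0$.
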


\newpage

\section{Parametrization of a one--factor affine model}
\label{sec:3}

As general references we shall use, concerning Bernstein processes, our recent survey (\cite{5}),
and, concerning affine models, H\'enon's PhD thesis(\cite{3}) as well as Leblanc and Scaillet's seminal paper (\cite{4}).

An \it one--factor affine interest rate model \rm  is characterized by the instantaneous rate $r(t)$, satisfying
the following stochastic differential equation :
\begin{eqnarray}        
dr(t)=\sqrt{\alpha r(t) + \beta} \,\, dw(t) +(\phi - \lambda r(t)) \,\, dt 
\end{eqnarray}
under the risk--neutral probability $Q$
($\alpha=0$ corresponds to the so--called Vasicek model, and $\beta=0$ corresponds to the
Cox--Ingersoll--Ross model ; cf. \cite{4}).

Assuming $\alpha>0$, let us set $$\tilde{\phi}=_{def}\phi + \displaystyle\frac{\lambda \beta}{\alpha}\,\, ,$$  $$\delta=_{def}\displaystyle\frac{4\tilde{\phi}}{\alpha}\,\, ,$$ and let us also assume
that $\tilde{\phi}\geq 0$.

The following two quantities will play an important role :

\begin{eqnarray}
C 
&:=&\frac{\alpha^{2}}{8}(\tilde{\phi}-\frac{\alpha}{4})(\tilde{\phi}-\frac{3\alpha}{4}) \nonumber \\
&=&\frac{\alpha^{4}}{128}(\delta-1)(\delta-3) \nonumber
\end{eqnarray}

and

$$
D:=\frac{\lambda^{2}}{8}\,\, .
$$

Let us set $X_{t}=\alpha r(t)+\beta$.
\begin{proposition}Let $r_{0}\in \mathbf R$ ;
then the stochastic differential equation
\begin{eqnarray}
dr(t)=\sqrt{\vert \alpha r(t) + \beta \vert} \,\, dw(t) +(\phi - \lambda r(t)) \,\, dt 
\end{eqnarray}
has a unique strong solution such that $r(0)=r_{0}$.
Furthermore, in case that $$\alpha r_{0}+\beta \geq 0\,\, ,$$ 
 one has  $\alpha r(t)+\beta \geq 0$ for all $t\geq 0$ ;
in particular, $r(t)$ satisfies $(3.1)$.
\end{proposition}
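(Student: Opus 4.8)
The plan is to pass to the process $X_{t}=\alpha r(t)+\beta$, for which the equation takes a familiar square-root form. Since $\alpha>0$, the map $r\mapsto\alpha r+\beta$ is an affine bijection, so solving $(3.2)$ for $r$ with $r(0)=r_{0}$ is equivalent to solving the corresponding equation for $X$ with $X_{0}=\alpha r_{0}+\beta$. Using linearity and the identity $\phi-\lambda r=\tilde{\phi}-\frac{\lambda}{\alpha}X$, equation $(3.2)$ becomes
\begin{equation*}
dX_{t}=\alpha\sqrt{\vert X_{t}\vert}\,dw(t)+(\alpha\tilde{\phi}-\lambda X_{t})\,dt.
\end{equation*}
The drift $b(x)=\alpha\tilde{\phi}-\lambda x$ is affine, hence Lipschitz with linear growth, while the diffusion coefficient $\sigma(x)=\alpha\sqrt{\vert x\vert}$ is continuous, of sublinear growth, and H\"older of exponent $\tfrac12$.

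First I would establish existence and uniqueness of a strong solution for arbitrary $r_{0}\in\mathbf R$. Weak existence, with no explosion, follows from the continuity and linear growth of $b$ and $\sigma$ by the standard Skorokhod argument. For pathwise uniqueness I would invoke the Yamada--Watanabe criterion: one has $\vert\sigma(x)-\sigma(y)\vert\le\alpha\sqrt{\vert x-y\vert}$, and $\rho(u)=\alpha\sqrt{u}$ satisfies $\int_{0+}\rho(u)^{-2}\,du=+\infty$, while $b$ is Lipschitz, so pathwise uniqueness holds. Weak existence together with pathwise uniqueness yields a unique strong solution $X$, and transporting back through $r(t)=(X_{t}-\beta)/\alpha$ gives the unique strong solution of $(3.2)$.

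It remains to prove the positivity statement: if $X_{0}=\alpha r_{0}+\beta\ge0$, then $X_{t}\ge0$ for all $t\ge0$. Here I would exploit precisely the Bessel structure that motivates the paper's notation. On the region where $X\ge0$ the absolute value disappears and $X$ is a mean-reverting square-root diffusion; the deterministic scaling $Y=\tfrac{4}{\alpha^{2}}X$ together with the time change $\tau(t)=\tfrac{\alpha^{2}}{4\lambda}(e^{\lambda t}-1)$ (respectively $\tau(t)=\tfrac{\alpha^{2}}{4}t$ when $\lambda=0$) identifies $X_{t}=e^{-\lambda t}\rho(\tau(t))$ with $e^{-\lambda t}$ times a squared Bessel process $\rho$ of dimension $\delta=\frac{4\tilde{\phi}}{\alpha}$. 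Since $\tilde{\phi}\ge0$ forces $\delta\ge0$, the process $\rho$ stays in $[0,\infty)$, whence $X_{t}\ge0$. Concretely I would run this in reverse: construct $X$ from such a $\rho$, which exists and is non-negative for $\delta\ge0$, verify by It\^o's formula that it solves the square-root equation, and appeal to the pathwise uniqueness already established to identify it with the solution of $(3.2)$. Once $X_{t}\ge0$ is known, $\sqrt{\vert X_{t}\vert}=\sqrt{\alpha r(t)+\beta}$ and the absolute value in $(3.2)$ is superfluous, so $r$ satisfies $(3.1)$.

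The main obstacle is the non-Lipschitz square-root diffusion coefficient, which rules out the elementary Picard--Lindel\"of theory and forces the Yamada--Watanabe machinery for uniqueness and, through the comparison principle it supplies, for positivity. The most delicate point in the positivity argument is the degeneracy of $\sigma$ at the boundary $0$: one must verify that the local time of $X$ at $0$ contributes nothing, or equivalently that the squared Bessel process of dimension $\delta\ge0$ cannot cross below $0$. A purely probabilistic alternative, avoiding the time change, is the Tanaka estimate for $E[(X_{t})^{-}]$: since $\sigma(0)=0$ annihilates the local-time term and $b(0)=\alpha\tilde{\phi}\ge0$, a Gronwall inequality forces $E[(X_{t})^{-}]=0$, giving $X_{t}\ge0$ almost surely.
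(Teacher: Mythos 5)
Your proof is correct and follows essentially the same route as the paper: both reduce $(3.2)$ via $X_{t}=\alpha r(t)+\beta$ to the square-root (CIR-type) equation $dX_{t}=\alpha\sqrt{\vert X_{t}\vert}\,dw(t)+(\alpha\tilde{\phi}-\lambda X_{t})\,dt$. The only difference is that the paper then simply cites equation (1), p.~313 of G\"oing--Jaeschke and Yor (and H\'enon's thesis when $\lambda\neq 0$) for existence, uniqueness and non-negativity, whereas you supply the standard underlying arguments yourself --- Yamada--Watanabe for pathwise uniqueness and the Tanaka/local-time (or Bessel time-change) argument for non-negativity, using $\tilde{\phi}\geq 0$ exactly where it is needed.
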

\begin{proof}

Let us set $X_{t}=\alpha r(t)+\beta$ ; it is easy to see that,
in terms of $X_{t}$, equation $3.2$ becomes:

\begin{eqnarray}
dX_{t} 
&=&\alpha dr(t) \nonumber \\
&=&\alpha(\sqrt{\vert X_{t} \vert}dw(t)+(\phi-\lambda\frac{X_{t}-\beta}{\alpha})dt) \nonumber \\
&=&\alpha\sqrt{\vert X_{t} \vert}dw(t)+(\alpha\tilde{\phi}-\lambda X_{t})dt  \,\, . \nonumber
\end{eqnarray}

We are therefore in the situation of $(1)$, p.313, in \cite{2}, with $c=\alpha$,
$a=\alpha\tilde{\phi}$, and $b=-\lambda$ ; the result follows.

In case $\lambda\neq 0$, one may also refer to \cite{3},p.55, Proposition 12.1, with $\sigma=\alpha$, $\kappa=\lambda$ and
$a=\displaystyle\frac{\alpha\tilde{\phi}}{\lambda}$.
\end{proof}

We shall henceforth assume all of the hypotheses of Proposition 3.1 to be satisfied.
\begin{corollary}One has
$$
\left\{
\begin{array}{lr}
X_{t}=e^{-\lambda t} \, Y(\frac{\alpha^{2}(e^{\lambda t}-1)}{4\lambda}) \,\, \text{for} \,\, \lambda \neq 0 \nonumber \,\, , \text{and} \\
X_{t}=Y(\frac{\alpha^{2}t}{4}) \,\, \text{for} \,\, \lambda = 0 \nonumber \\
\end{array}
\right.
$$
where $Y$ is a $BESQ^{\delta}$(squared Bessel process with parameter 
$\delta$) having initial value $Y_{0}=\alpha r_{0}+\beta$.
\end{corollary}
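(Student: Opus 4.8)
The plan is to start from the stochastic differential equation already derived for $X_{t}$ and to reduce it, by an exponential rescaling followed by a deterministic time change, to the defining equation of a squared Bessel process --- transformations of precisely the kind responsible for the Bessel symmetries noted in Section 2. Recall from the proof of Proposition 3.1 that
\[
dX_{t}=\alpha\sqrt{|X_{t}|}\,dw(t)+(\alpha\tilde{\phi}-\lambda X_{t})\,dt,
\qquad X_{0}=\alpha r_{0}+\beta,
\]
and that, by definition, a nonnegative process $Y$ is a $BESQ^{\delta}$ exactly when $dY_{s}=2\sqrt{|Y_{s}|}\,dB_{s}+\delta\,ds$ for some Brownian motion $B$.

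Treating first the case $\lambda\neq 0$, I would kill the mean-reverting drift by setting $U_{t}=e^{\lambda t}X_{t}$. A one-line application of It\^o's formula, using $|X_{t}|=e^{-\lambda t}|U_{t}|$, gives
\[
dU_{t}=\alpha\,e^{\lambda t/2}\sqrt{|U_{t}|}\,dw(t)+\alpha\tilde{\phi}\,e^{\lambda t}\,dt,
\]
the linear term having cancelled. Next I would introduce the deterministic clock $\rho(t)=\frac{\alpha^{2}(e^{\lambda t}-1)}{4\lambda}$, with $\rho(0)=0$ and $\rho'(t)=\frac{\alpha^{2}}{4}e^{\lambda t}$, and define $Y_{s}=U_{\rho^{-1}(s)}$, so that $U_{t}=Y(\rho(t))$ and hence $X_{t}=e^{-\lambda t}Y(\rho(t))$ as claimed. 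Computing the characteristics of $Y$ under this time change, the quadratic variation of its martingale part becomes $d\langle Y\rangle_{s}=4|Y_{s}|\,ds$ and its drift becomes $\frac{4\tilde{\phi}}{\alpha}\,ds=\delta\,ds$, the factors $e^{\pm\lambda t}$ cancelling against $\rho'$. The case $\lambda=0$ is identical but simpler, requiring only the time change $\rho(t)=\frac{\alpha^{2}t}{4}$ with no preliminary rescaling.

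It remains to recognise $Y$ as a genuine $BESQ^{\delta}$. Its martingale part $M$ is a continuous local martingale with $d\langle M\rangle_{s}=4|Y_{s}|\,ds$, so the Dambis--Dubins--Schwarz theorem lets me write $dM_{s}=2\sqrt{|Y_{s}|}\,dB_{s}$ for a suitable Brownian motion $B$; together with the drift computed above this shows that $Y$ satisfies the defining equation of a squared Bessel process of dimension $\delta$, with $Y_{0}=U_{0}=X_{0}=\alpha r_{0}+\beta$. Finally, under the hypotheses of Proposition 3.1 one has $X_{t}\geq 0$, hence $Y_{s}\geq 0$, so the absolute values may be dropped throughout and $Y$ is a bona fide $BESQ^{\delta}$.

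The step I expect to demand the most care is the time change of the stochastic integral and the concomitant identification of the driving Brownian motion $B$: one must verify that, $\rho$ being deterministic and strictly increasing, $Y$ is again a continuous semimartingale in the new time whose martingale part has quadratic variation exactly $4Y_{s}\,ds$. Everything else amounts to the routine matching of coefficients, which is forced by the definition $\delta=4\tilde{\phi}/\alpha$ and the specific choice of $\rho$.
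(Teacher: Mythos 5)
Your argument is correct, but it is organised differently from the paper's. For $\lambda\neq 0$ the paper simply invokes the known space--time transformation between mean-reverting square-root diffusions and squared Bessel processes (the result on p.~314 of the G\"oing--Jaeschke--Yor survey), whereas you reprove that transformation from scratch: exponential rescaling $U_t=e^{\lambda t}X_t$ to kill the linear drift, deterministic time change by $\rho(t)=\frac{\alpha^{2}(e^{\lambda t}-1)}{4\lambda}$, and identification of the driving Brownian motion via the representation theorem for continuous local martingales with absolutely continuous quadratic variation. For $\lambda=0$ the paper instead rescales space, observes that $Z_t=\frac{4}{\alpha^{2}}X_t$ solves $dZ_t=2\sqrt{|Z_t|}\,dw+\delta\,dt$ on the original clock, and then trades the spatial factor $\frac{\alpha^{2}}{4}$ for the time change $t\mapsto\frac{\alpha^{2}t}{4}$ using the scaling property of $BESQ^{\delta}$; your direct time change is the same computation seen from the other side and has the small advantage of producing $Y_0=X_0$ without a further appeal to scaling. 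Two minor points you should make explicit: (i) the martingale-part step is really the representation theorem for local martingales with $d\langle M\rangle_s=4|Y_s|\,ds$ (possibly on an enlarged probability space where $Y$ vanishes), rather than Dambis--Dubins--Schwarz proper; (ii) when $\lambda<0$ the clock $\rho$ maps $[0,\infty)$ onto the bounded interval $[0,\frac{\alpha^{2}}{4|\lambda|})$, so your construction only determines $Y$ there and you must note that it extends to a genuine $BESQ^{\delta}$ on all of $[0,\infty)$ (which suffices, since the corollary only evaluates $Y$ at times in the range of $\rho$). Neither point is a gap in substance.
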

\begin{proof}In case $\lambda\neq 0$, one applies the result of \cite{3}, p. 314.
For $\lambda=0$, let
$$
Z_{t}:=\displaystyle\frac{4}{\alpha^{2}}X_{t}\,\, ;
$$
it appears that 
$$
dZ_{t}=2\sqrt{\vert Z_{t} \vert}dw(t)+\delta dt \,\, ,
$$
whence $Z_{t}$ is a $BESQ^{\delta}$--process.
As
$$
X_{t}:=\displaystyle\frac{\alpha^{2}}{4}Z_{t}\,\, ,
$$
the scaling property of Bessel processes 
yields the result.
\end{proof}

\begin{theorem}If $\delta\geq 2$ one has, almost surely :
$$
\forall t>0  \,\,\, X_{t}>0\,\, ;
$$
on the other hand, if $\delta<2$, almost surely there is a $t>0$ such that $X_{t}=0$.
\end{theorem}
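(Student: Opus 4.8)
The plan is to exploit Corollary 3.3, which expresses $X_t$ as a deterministic time-change (and, when $\lambda\neq 0$, a deterministic rescaling by $e^{-\lambda t}>0$) of a squared Bessel process $Y$ of dimension $\delta$ started at $Y_0=\alpha r_0+\beta$. Since the factor $e^{-\lambda t}$ is strictly positive and the time-change $t\mapsto \frac{\alpha^2(e^{\lambda t}-1)}{4\lambda}$ (respectively $t\mapsto\frac{\alpha^2 t}{4}$ when $\lambda=0$) is a strictly increasing continuous bijection from $(0,\infty)$ onto $(0,\infty)$, the sign of $X_t$ for $t>0$ coincides with the sign of $Y_s$ at the corresponding time $s>0$, and the event that $X_t$ hits $0$ for some $t>0$ is equivalent to the event that $Y_s$ hits $0$ for some $s>0$. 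Thus the theorem reduces entirely to the classical hitting behaviour of $BESQ^\delta$ at the origin.

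\textbf{Invoking the classical dimension dichotomy.} First I would record the standard fact about squared Bessel processes: for $\delta\geq 2$ the origin is polar, so almost surely $Y_s>0$ for all $s>0$ regardless of the (nonnegative) starting point, whereas for $0\le\delta<2$ the process reaches $0$ almost surely (in finite time when started away from $0$). This is exactly the Revuz--Yor / classical Bessel dichotomy, and given that the paper already cites the relevant references for Bessel processes it is legitimate to quote it. Transporting this back through the time-change gives the two halves of the statement directly: $\delta\geq 2$ yields $X_t>0$ for all $t>0$ almost surely, and $\delta<2$ yields the almost-sure existence of a hitting time.

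\textbf{Where the real work sits.} The step that deserves genuine care is the boundary case $\delta=2$, which belongs to the $\delta\geq 2$ side of the statement; here the process is recurrent to any neighbourhood of $0$ yet still almost surely never attains $0$, so one must be sure to use the \emph{non-attainment} (polarity) rather than mere transience. I would make this explicit rather than lump it with $\delta>2$. A secondary subtlety is that Corollary 3.3 produces a $BESQ^\delta$ process, and one should confirm that the abstract squared-Bessel law—rather than merely the SDE it satisfies—governs $Y$, so that the classical hitting theorems apply verbatim; but since the corollary already identifies $Y$ as a genuine $BESQ^\delta$, this is immediate. Finally I would note the harmless point that at $t=0$ one has $X_0=Y_0=\alpha r_0+\beta$, which may itself equal $0$; the theorem only concerns $t>0$, so the starting value plays no role in either conclusion.

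\textbf{Assembling the proof.} In summary, I would (i) fix the time-change bijection and the strict positivity of the scaling factor from Corollary 3.3; (ii) state the $BESQ^\delta$ hitting dichotomy at the threshold $\delta=2$; (iii) pull each case back to $X_t$ through the bijection; and (iv) treat $\delta=2$ on the positive side via polarity of the origin. I expect the only genuine obstacle to be the correct and explicit handling of the critical dimension $\delta=2$; everything else is a transparent transfer of a known result across a deterministic, monotone, sign-preserving change of variables.
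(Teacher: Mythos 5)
Your argument is sound and in substance it is the same reduction the paper relies on, except that you actually carry it out: the paper's proof consists of citing a ready-made dichotomy for the square-root diffusion (Corollary 1, p.~317 of G\"oing--Jaeschke and Yor, plus the alternative reference to H\'enon's thesis, where the condition is the Feller condition $2a\kappa/\sigma^{2}\geq 1$, which the paper translates into $\delta/2\geq 1$), whereas you unpack that citation by passing through the paper's own Corollary on the representation $X_{t}=e^{-\lambda t}\,Y(\tau(t))$ with $Y$ a $BESQ^{\delta}$ and then invoking the classical polarity/attainability dichotomy for squared Bessel processes at dimension $2$. Your explicit insistence on polarity (rather than transience) at the critical dimension $\delta=2$ is a genuine improvement in care over the paper's one-line appeal to the literature.

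There is, however, one concrete gap in your write-up. You assert that the time change $t\mapsto\tau(t)=\frac{\alpha^{2}(e^{\lambda t}-1)}{4\lambda}$ is a bijection from $(0,\infty)$ onto $(0,\infty)$. That is true for $\lambda\geq 0$, but for $\lambda<0$ the image is the bounded interval $\bigl(0,\tfrac{\alpha^{2}}{4|\lambda|}\bigr)$. This is harmless for the first half of the theorem ($Y$ never vanishing on $(0,\infty)$ still forces $X_{t}>0$ for all $t>0$), but the second half is exactly where surjectivity is used: for $\delta<2$ the $BESQ^{\delta}$ hitting time of $0$ is a.s.\ finite but not a.s.\ bounded by $\frac{\alpha^{2}}{4|\lambda|}$, so your transfer argument does not yield the a.s.\ existence of a zero of $X$ when $\lambda<0$ (and indeed the assertion itself is doubtful in that regime, since the drift then pushes the process to infinity). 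You should either restrict to the mean-reverting case $\lambda\geq 0$ --- which is the standard setting for these models and is implicitly what the cited results assume --- or explicitly justify why the relevant range of the time change suffices. With that restriction stated, your proof is complete.
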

\begin{proof}We apply Corollary 1, p. 317, from \cite{4}(12.2)  yielding that 
\begin{eqnarray}
\forall t>0 \,\,\, X_{t}>0 \,\, .
\end{eqnarray}

One may also use \cite{3}, p.56, from which follows that $(3.3)$
is equivalent to 
$$\displaystyle\frac{2a\kappa}{\sigma^{2}}\geq 1 \,\, ; $$
 but,
according to the above identifications,
$$
\displaystyle\frac{2a\kappa}{\sigma^{2}}=\displaystyle\frac{2{\displaystyle\frac{\alpha\tilde{\phi}}{\lambda}}{\lambda}}{\alpha^{2}}
=\displaystyle\frac{2\tilde{\phi}}{\alpha}=\displaystyle\frac{\delta}{2}\,\, .
$$
\end{proof}

Our main result is the following :
\begin{theorem} Let us define the process
$$
z(t)=\sqrt{X_{t}}
$$
and the stopping time
$$
T=\inf\{t>0 \vert X_{t}=0\} ;
$$
as seen in Theorem 3.3, $T=+\infty \,\, 
a.s.$ for $\delta\geq 2$,
and 
$T<+\infty \,\, a.s.$ for $\delta<2$.
\, 
Then there exists a Bernstein process $y(t)$
for 
$$
\theta=\frac{\alpha}{2}\,\,
$$
and the potential
$$
V(t,q)=\frac{C}{q^{2}}+Dq^{2}\,\, .
$$
such that
$$
\forall t\in [0,T[\,\, z(t)=y(t) \,\, .
$$
In particular, for $\delta\geq 2$, $z$ itself is a Bernstein process.
\end{theorem}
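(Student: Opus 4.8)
The plan is to reduce everything to the stochastic differential equation satisfied by $z(t)=\sqrt{X_{t}}$ and then to match it, coefficient by coefficient, with the forward dynamics of a Bernstein process for $(\theta,V)$. First I would recall, from the proof of Proposition 3.1, that on the random interval $[0,T[$ (where $X_{t}>0$, so $\vert X_{t}\vert=X_{t}$) the process $X_{t}$ obeys
$$dX_{t}=\alpha\sqrt{X_{t}}\,dw(t)+(\alpha\tilde\phi-\lambda X_{t})\,dt.$$
Applying It\^o's formula to $z=\sqrt{X}$, which is legitimate on $[0,T[$ where $z>0$ and the square root is smooth, the quadratic variation term $d\langle X\rangle_{t}=\alpha^{2}X_{t}\,dt$ produces a $1/z$ correction, and one gets
$$dz(t)=\frac{\alpha}{2}\,dw(t)+\Bigl[\bigl(\tfrac{\alpha\tilde\phi}{2}-\tfrac{\alpha^{2}}{8}\bigr)\frac{1}{z(t)}-\frac{\lambda}{2}z(t)\Bigr]\,dt.$$
The diffusion coefficient is exactly $\alpha/2=\theta$, as claimed; it remains to recognise the drift as the forward drift of a Bernstein process for the potential $V$.

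To that end I would exhibit an explicit solution of $(\mathcal H\mathcal J\mathcal B^{V})$ whose associated drift $-\partial_{q}S$ reproduces the bracket above. Writing $a:=\tfrac{\alpha\tilde\phi}{2}-\tfrac{\alpha^{2}}{8}=\tfrac{\alpha^{2}}{8}(\delta-1)$, the natural ansatz is
$$S(t,q)=-a\log q+\frac{\lambda}{4}q^{2}+g(t),$$
since then $-\partial_{q}S=\tfrac{a}{q}-\tfrac{\lambda}{2}q$, which is precisely the drift of $z$. Substituting this $S$ into $(\mathcal H\mathcal J\mathcal B^{V})$ and collecting powers of $q$, the coefficient of $q^{2}$ forces $D=\lambda^{2}/8$, the constant term forces $g$ to be affine in $t$ (harmless), and the coefficient of $q^{-2}$ forces $C=\tfrac{a^{2}}{2}-\tfrac{\theta^{2}a}{2}$. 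With $\theta^{2}=\alpha^{2}/4$ this last identity is exactly
$$C=\frac{\alpha^{2}}{8}\Bigl(\tilde\phi-\frac{\alpha}{4}\Bigr)\Bigl(\tilde\phi-\frac{3\alpha}{4}\Bigr)=\frac{\alpha^{4}}{128}(\delta-1)(\delta-3),$$
the very quantities fixed before the Proposition. Thus $S$ solves $(\mathcal H\mathcal J\mathcal B^{V})$ for precisely the prescribed $C$ and $D$, and the corresponding positive function $\eta=e^{-S/\theta^{2}}=q^{a/\theta^{2}}\exp\bigl(-\tfrac{\lambda}{4\theta^{2}}q^{2}\bigr)\,e^{-g(t)/\theta^{2}}$ on $(0,\infty)$ defines a genuine Bernstein process $y$ whose forward SDE has diffusion coefficient $\theta$ and drift $\theta^{2}\partial_{q}\log\eta=-\partial_{q}S$.

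Finally I would impose $y(0)=z(0)=\sqrt{\alpha r_{0}+\beta}$ and invoke pathwise uniqueness: on $(0,\infty)$ the coefficients $\theta$ and $\tfrac{a}{q}-\tfrac{\lambda}{2}q$ are locally Lipschitz, so $z$ and $y$ solve the same equation with the same initial datum, whence $z=y$ on $[0,T[$. For $\delta\geq2$, Theorem 3.3 gives $T=+\infty$ almost surely, so $z=y$ for all $t\geq0$ and $z$ is itself a Bernstein process. The step I expect to be the crux is the algebraic verification that the $q^{-2}$ coefficient produced by the It\^o correction matches the prescribed $C$: the factorised form of $C$ is exactly what makes the It\^o drift a gradient $-\partial_{q}S$ for a solution of the HJB equation, so this identity is the hinge on which the whole identification turns. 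A secondary point requiring care is confirming that matching the generator together with the positivity of $\eta$ on the state space $(0,\infty)$ genuinely exhibits $z$ as the Bernstein process rather than merely a diffusion with the same coefficients; this is why the localisation to $[0,T[$, which keeps the process away from the singularity at $0$, is essential.
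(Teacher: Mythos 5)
Your proposal is correct and follows essentially the same route as the paper: It\^o's formula applied to $\sqrt{X_{t}}$, followed by identification of the resulting drift with $-\partial_{q}S$ for an explicit solution $S=-a\ln q+\frac{\lambda}{4}q^{2}+g(t)$ of $(\mathcal H\mathcal J\mathcal B^{V})$, equivalently the positive solution $\eta=e^{-S/\theta^{2}}$ of the associated heat-type equation. The only differences are cosmetic: you run the verification backwards (ansatz for $S$, then derive that the $q^{-2}$ and $q^{2}$ coefficients force exactly the prescribed $C$ and $D$) where the paper writes $\eta$ down and checks it, and you make the final pathwise-uniqueness identification on $[0,T[$ explicit where the paper leaves it implicit.
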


\begin{proof}
One has (cf. Proposition 3.1 and its proof)

\begin{eqnarray}
dX_{t} \nonumber
&=&\alpha\sqrt{X_{t}}dw(t)+(\alpha\tilde{\phi}-\lambda X_{t})dt \nonumber \\
&=&\alpha z(t)dw(t)+(\alpha\tilde{\phi}-\lambda z(t)^{2})dt \,\, .\nonumber
\end{eqnarray}

Taking now  $f(x)=\sqrt{x}$, we have $$\forall x>0\,\, f^{'}(x)=\frac{1}{2\sqrt{x}}\,\,\text{and}
\,\,f^{''}(x)=-\frac{1}{4}x^{-\frac{3}{2}}\,\, ,$$ therefore, for all $t\in ]0,T[$,
$f^{'}(X_{t})=\displaystyle\frac{1}{2z(t)}$
and $f^{''}(X_{t})=-\frac{1}{4}z(t)^{-3}$.
The application of It\^o's formula now gives :

\begin{eqnarray}
dz(t) \nonumber
&=&d(f(X_{t})) \nonumber \\
&=&f^{'}(X_{t})dX_{t}+\frac{1}{2}f^{''}(X_{t})(dX_{t})^{2} \nonumber \\
&=&\frac{1}{2z(t)}(\alpha z(t)dw(t)+(\alpha\tilde{\phi}-\lambda z(t)^{2})dt)
-\frac{1}{8}z(t)^{-3}\alpha^{2}z(t)^{2}dt \nonumber \\
&=&\displaystyle\frac{\alpha}{2}dw(t)+\frac{1}{8z(t)}(4\alpha\tilde{\phi}-4\lambda z(t)^{2}-\alpha^{2})dt \,\, .\nonumber 
\end{eqnarray}

Let us now define $\eta$ by
\begin{eqnarray}
\eta(t,q) 
&:=&e^{\displaystyle\frac{\lambda\tilde{\phi}t}{\alpha}-\displaystyle\frac{\lambda q^{2}}{\alpha^{2}}}
q^{\displaystyle\frac{2\tilde{\phi}}{\alpha}-\displaystyle\frac{1}{2}}
\,\, \nonumber \\
&=&e^{\displaystyle\frac{\lambda \delta t}{4}-\displaystyle\frac{\lambda q^{2}}{\alpha^{2}}}
q^{\displaystyle\frac{\delta - 1}{2}} \nonumber \,\, .
\end{eqnarray}

It is easy to check that $\eta$ solves the equation
$$
\theta^{2}\displaystyle\frac{\partial \eta}{\partial t}=-\displaystyle\frac{\theta^{4}}{2}
\displaystyle\frac{\partial^{2}\eta}{\partial q^{2}}+V\eta  
$$
for
$$
V=\frac{C}{q^{2}}+Dq^{2}\,\, ;
$$
in other words,
\begin{eqnarray}
S \nonumber
&:=&-\theta^{2}\ln(\eta) \nonumber \\
&=&-\theta^{2}(\displaystyle\frac{\lambda \delta t}{4}-\displaystyle\frac{\lambda q^{2}}{\alpha^{2}}
+\displaystyle\frac{\delta - 1}{2}\ln(q)) \nonumber \,\, \\
&=&-\displaystyle\frac{\alpha^{2}\lambda \delta t}{16}+\displaystyle\frac{\lambda q^{2}}{4}
-\alpha^{2}(\frac{\delta - 1}{8})\ln(q) \nonumber \,\, 
\end{eqnarray}
satisfies $(\mathcal H\mathcal J\mathcal B^{V})$.
Furthermore we 
have :

\begin{eqnarray}
\tilde{B} 
&:=&\theta^{2}\displaystyle\frac{\displaystyle\frac{\partial \eta}{\partial q}}{\eta} \nonumber \\
&=&-\displaystyle\frac{\partial S}{\partial q} \nonumber \\ 
&=&-\displaystyle\frac{\lambda q}{2}+\displaystyle\frac{\alpha^{2}(\delta - 1)}{8q} \nonumber \\
&=& \displaystyle\frac{1}{8q}(\alpha^{2}\delta-\alpha^{2}-4\lambda q^{2}) \nonumber
\end{eqnarray}
whence
$$
\tilde{B}(t,z(t))=\displaystyle\frac{1}{8z(t)}(4\alpha\tilde{\phi}-\alpha^{2}-4\lambda z(t)^{2})
$$
and $z$ satisfies the stochastic differential equation associated with $\eta$ :
$$
\forall t\in ]0,T[ \,\,\, dz(t)=\theta dw(t)+\tilde{B}(t,z(t))dt
$$
(as in \cite{5}, \S 1, equation ($\mathcal B)$) ; the result follows.
\end{proof}

\begin{proposition} The isovector algebra $\mathcal H_{V}$ associated with $V$
has dimension $6$ if and only if $\tilde{\phi}\in\{\frac{\alpha}{4},\frac{3\alpha}{4}\}$, i.e. $\delta\in \{1,3\}$ ; in the opposite case,
it has dimension $4$.
\end{proposition}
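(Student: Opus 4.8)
The plan is to reduce this statement entirely to Theorem 2.1, which already classifies $\dim(\mathcal{H}_V)$ in terms of the single scalar $C$: the algebra has dimension $6$ precisely when $C=0$, and dimension $4$ otherwise. Since Theorem 2.1 is stated for the general potential $V(t,q)=\frac{C}{q^2}+Dq^2$ and makes no reference to the finance parameters $\alpha,\beta,\phi,\lambda$, all that remains is to translate the condition $C=0$ into a condition on $\tilde{\phi}$ (equivalently $\delta$), using the explicit expression for $C$ recorded at the start of Section~3.

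The key step is therefore to examine the factored form
$$
C=\frac{\alpha^{2}}{8}\Bigl(\tilde{\phi}-\frac{\alpha}{4}\Bigr)\Bigl(\tilde{\phi}-\frac{3\alpha}{4}\Bigr)=\frac{\alpha^{4}}{128}(\delta-1)(\delta-3)\,\,.
$$
Because we are working under the standing hypothesis $\alpha>0$ (from the setup of Proposition~3.1, which we are assuming throughout), the prefactors $\frac{\alpha^{2}}{8}$ and $\frac{\alpha^{4}}{128}$ are both strictly positive and hence nonzero. Consequently $C=0$ holds if and only if one of the two linear factors vanishes, that is, if and only if $\tilde{\phi}=\frac{\alpha}{4}$ or $\tilde{\phi}=\frac{3\alpha}{4}$; using $\delta=\frac{4\tilde{\phi}}{\alpha}$ this is exactly the condition $\delta\in\{1,3\}$.

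Combining these two observations finishes the argument: $\dim(\mathcal{H}_V)=6$ iff $C=0$ iff $\tilde{\phi}\in\{\frac{\alpha}{4},\frac{3\alpha}{4}\}$ iff $\delta\in\{1,3\}$, and in every other case $C\neq 0$, forcing $\dim(\mathcal{H}_V)=4$. There is no genuine analytic obstacle here, since the hard work—the solution of the auxiliary ODE system and the resulting dimension count—was already carried out in Section~2; the only point requiring care is the nonvanishing of the prefactor, which is where the hypothesis $\alpha>0$ enters and is what guarantees that $C=0$ is equivalent to the vanishing of the product of the two affine factors rather than to some degenerate coincidence. I would state this explicitly so that the logical dependence on $\alpha>0$ is transparent.
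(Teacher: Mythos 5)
Your proposal is correct and follows exactly the paper's own argument: invoke Theorem 2.1 and observe that, since $\alpha>0$, the factored form of $C$ vanishes precisely when $\tilde{\phi}\in\{\frac{\alpha}{4},\frac{3\alpha}{4}\}$, i.e.\ $\delta\in\{1,3\}$. Your only addition is making explicit the role of the hypothesis $\alpha>0$ in the nonvanishing of the prefactor, which the paper leaves implicit.
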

\begin{proof}
It is enough to apply Theorem 2.1, observing that the condition $C=0$ is equivalent to $\tilde{\phi}\in \{\frac{\alpha}{4},\frac{3\alpha}{4}\}$.
\end{proof}

In the context of H\'enon's already mentioned PhD thesis (\cite{3},p.55)
we have \newline $\phi=\kappa a$, $\lambda=\kappa$, $\alpha=\sigma^{2}$ et $\beta=0$,
whence $\tilde{\phi}=\kappa a$ and the condition $C=0$
is equivalent to
$$\kappa a\in\{\frac{\sigma^{2}}{4},\frac{3\sigma^{2}}{4}\}\,\, .$$
Let us analyze more closely the situation in which $C=0$ ;
the general case will be commented upon in \cite{6}.

\bf{1)}$\tilde{\phi}=\displaystyle\frac{\alpha}{4}$\rm, \it i.e. \rm $\delta=1\, .$

Then $y(t)$ is a solution of
$$
dy(t)=\displaystyle\frac{\alpha}{2}dw(t)-\frac{\lambda}{2}y(t)dt \nonumber \,\, ,
$$
\it i.e. \rm $y(t)$ is an Ornstein--Uhlenbeck process
(it was already known that the Ornstein--Uhlenbeck process was a Bernstein process for a quadratic potential).
Therefore $z(t)$ co\"\i ncides, on the random interval $[0,T[$, with an Ornstein--Uhlenbeck
process.
Here 
$$
\eta(t,q)=e^{\displaystyle\frac{\lambda t}{4}-\displaystyle\frac{\lambda q^{2}}{\alpha^{2}}}\,\, .
$$
From
\begin{eqnarray}
y(t)\nonumber
&=&e^{-\frac{\lambda t}{2}}(y_{0}+\frac{\alpha}{2}\int_{0}^{t}e^{\frac{\lambda s}{2}}dw(s)) \nonumber \\
&=&e^{-\frac{\lambda t}{2}}(z_{0}+\tilde{w}(\frac{\alpha^{2}(e^{\lambda t}-1)}{4\lambda})) \nonumber
\end{eqnarray}
($\tilde{w}$ denoting another Brownian motion),
it appears that $y(t)$ follows a normal law with mean $e^{-\frac{\lambda t}{2}}z_{0}$ and variance $\frac{\alpha^{2}(1-e^{-\lambda t})}{4\lambda}$.
The density $\rho_{t}(q)$ of $y(t)$ is therefore given by :
$$
\rho_{t}(q)=\displaystyle\frac{2\sqrt{\lambda}}{\alpha\sqrt{2\pi(1-e^{-\lambda t})}}
\exp{(-\displaystyle\frac{2\lambda(q-e^{-\frac{\lambda t}{2}}z_{0})^{2}}{\alpha^{2}(1-e^{-\lambda t})})}\,\, .
$$
Whence
\begin{eqnarray}
\forall t>0 \,\,\,\,
\eta_{*}(t,q) 
&=&\displaystyle\frac{\rho_{t}(q)}{\eta(t,q)} \nonumber \\
&=&\displaystyle\frac{1}{\alpha}\sqrt{\displaystyle\frac{\lambda}{\pi\sinh{(\displaystyle\frac{\lambda t}{2})}}}e^{(\displaystyle\frac{-\lambda q^{2}-\lambda q^{2}e^{-\lambda t}+4\lambda qz_{0}e^{-\frac{\lambda t}{2}}-2\lambda z_{0}^{2}e^{-\lambda t}}{\alpha^{2}(1-e^{-\lambda t})})} \nonumber
\end{eqnarray} 
and one may check that, as was to be expected, $\eta_{*}$ satisfies the following equation
($\mathcal C_{2}^{(V)}$ in \cite{5}) :
 
\begin{eqnarray}
-\theta^{2}\displaystyle\frac{\partial \eta_{*}}{\partial t}=-\displaystyle\frac{\theta^{4}}{2}
\displaystyle\frac{\partial^{2}\eta_{*}}{\partial q^{2}}+V\eta_{*}          \,\, .
\end{eqnarray}

\bf{2)}$\tilde{\phi}=\displaystyle\frac{3\alpha}{4}$\rm, \it i.e. \rm $\delta=3$.

In that case, according to Theorem 3.2, $T=+\infty$ whence $y=z$.
Furthermore
$$
\eta(t,q)=q e^{\displaystyle\frac{\lambda}{\alpha^{2}}(\displaystyle\frac{3\alpha^{2}t}{4}-q^{2})}\,\, .
$$
Let us define
$$
s(t)=e^{-\displaystyle\frac{\lambda t}{2}}\frac{1}{z(t)}\,\, ;
$$
then an easy computation, using It\^o's formula in the same way as above, shows that
$$
ds(t)=-\frac{\alpha}{2}e^{\frac{\lambda t}{2}}s(t)^{2}dw(t)\,\, ;
$$
in particular, $s(t)$ is a martingale.

Referring once more to Proposition 3.1 and its proof, we see that
$$
dX_{t}=\alpha \sqrt{X_{t}}dw(t)+(\frac{3\alpha^{2}}{4}-\lambda X_{t})dt\,\, .
$$

Let us now assume $X_{0}=0$ and $\lambda \neq 0$ ; then, according to Corollary 3.2,
$$
X_{t}=e^{-\lambda t} Y(\frac{\alpha^{2}(e^{\lambda t}-1)}{4\lambda})
$$
where $Y$ is a $BESQ^{3}$(squared Bessel process with parameter 
$3$) such that $Y(0)=0$.
But, for each fixed $t>0$, $Y_{t}$ has the same law as $tY_{1}$, and $Y_{1}=\vert\vert B_{1}\vert\vert^{2}$
is the square of the norm of a $3$--dimensional Brownian motion ; the law of $Y_{1}$
is therefore
$$
\frac{1}{\sqrt{2\pi}}e^{-\frac{u}{2}}\sqrt{u}\mathbf 1_{u\geq 0}du\,\, .
$$
Therefore the density $\rho_{t}(q)$ of the law of $z(t)$ is given by :
$$
\rho_{t}(q)=\frac{1}{\sqrt{2\pi}}\frac{16\lambda^{\frac{3}{2}}}{\alpha^{3}(1-e^{-\lambda t})^{\frac{3}{2}}}q^{2}e^{-\displaystyle\frac{2\lambda q^{2}}{\alpha^{2}(1-e^{-\lambda t})}}
$$
and
$$
\forall t>0 \,\, \eta_{*}(t,q)=\frac{\rho_{t}(q)}{\eta(t,q)}=
\frac{16\lambda^{\frac{3}{2}}}{\alpha^{3}\sqrt{2\pi}}(1-e^{-\lambda t})^{-\frac{3}{2}}qe^{-\displaystyle\frac{3\lambda t}{4}-\displaystyle\frac{\lambda q^{2}}{\alpha^{2}\tanh(\frac{\lambda t}{2})}} \,\, .
$$

Here, too, one may check directly that $\eta_{*}$ satisfies equation 
$(3.4)$ above.
\newpage
\section{Acknowledgements}

I am grateful to the colleagues who invited me to present preliminary versions
of this work and thereby provided me with a much--needed moral support : Professor Barbara R\"udiger (Koblenz, July 2007), Professor Pierre Patie (Bern, January 2009) and Professors Paul Bourgade and Ali S\"uleiman Ust\"unel
(Institut Henri Poincar\'e, February 2009). Comments by Professor Pierre Patie led to many improvements in the
formulations. I am also indebted to Mohamad Houda for a careful reading of previous versions
of the paper.

\newpage

\end{document}